\documentclass[letterpaper,10pt,conference]{ieeeconf}
\IEEEoverridecommandlockouts
\usepackage{amsmath,amssymb,amsthm,mathtools,bm}
\usepackage{algorithm}
\usepackage{algpseudocode}
\usepackage{tabularx}
\usepackage{xcolor}
\usepackage{subfig}

\newtheorem{lemma}{Lemma}
\newtheorem{proposition}{Proposition}
\newtheorem{theorem}{Theorem}
\newtheorem{assumption}{Assumption}
\newtheorem{remark}{Remark}

\newcommand{\R}{\mathbb{R}}
\newcommand{\M}{\mathcal{M}}
\newcommand{\norm}[1]{\left\lVert #1\right\rVert}
\newcommand{\inner}[2]{\left\langle #1,#2\right\rangle}

\newcommand{\grad}{\operatorname{grad}}
\newcommand{\Exp}{\operatorname{Exp}}
\newcommand{\Log}{\operatorname{Log}}
\newcommand{\dist}{\operatorname{dist}}

\newcommand{\argmin}{\operatorname*{arg\,min}}
\newcommand{\D}{\mathrm{D}}

\title{\LARGE\bf
Riemannian Density-Driven Optimal Control:
Tangent-Space LQR for Second-Order Multi-Agent Systems on Curved Manifolds}

\author{
Kooktae Lee$^{1}$ and Ruchika Singh$^{1}$
\thanks{*This work was supported by NSF CAREER Grant CMMI-DCSD-2638508.}
\thanks{$^{1}$Kooktae Lee and Ruchika Singh are with the Department of Mechanical and Aerospace Engineering, Texas Tech University, Lubbock, TX 79409, USA, email: kooktae.lee@ttu.edu, ruchisin@ttu.edu}  
}

\begin{document}
\maketitle
\thispagestyle{empty}
\pagestyle{empty}

\begin{abstract}
Density-Driven Optimal Control (D$^2$OC) provides an effective framework for steering multi-agent systems toward prescribed spatial distributions. However, existing D$^2$OC formulations are primarily developed for Euclidean domains and do not directly account for intrinsic manifold geometry. This paper extends D$^2$OC to second-order multi-agent systems evolving on Riemannian manifolds. The proposed Riemannian D$^2$OC ($R$-D$^2$OC) constructs a local distribution objective in the tangent space of each agent through logarithmic maps and uses its weighted center as the reference for a finite-horizon LQR. The resulting control is executed on the manifold through intrinsic second-order dynamics and parallel transport within a receding-horizon scheme. We establish local curvature-dependent bounds that quantify the approximation introduced by the tangent-space reduction and characterize the resulting target bias. Furthermore, we derive a conditional discrete-descent result showing that the closed-loop objective decreases when a local velocity-alignment condition is satisfied. Numerical simulations on a 3D ellipsoidal manifold demonstrate distribution-level control and empirically support the proposed approximation and descent results.
\end{abstract}
\section{Introduction}

Steering multi-agent systems toward prescribed spatial distributions while minimizing control effort is a fundamental challenge in robotics \cite{seo2025d2oc}. Density-Driven Optimal Control (D$^2$OC) addresses this via a Lagrangian framework penalizing spatial distribution mismatch along control effort \cite{seo2025d2oc,seo2026farm,lee2025connectivity,lee2026tac,lee2026automatica}. Existing D$^2$OC methods, however, rely on Euclidean spaces where quadratic density objectives yield standard linear quadratic tracking representations. When agent configurations evolve on curved manifolds, these Euclidean formulations break down because linear vector operations violate intrinsic geometric constraints.

To address these geometric limitations in multi-agent control, prior efforts have broadly explored three avenues. First, geometric control on Riemannian manifolds and Lie groups intrinsically resolves rotational singularities for attitude and rigid-body tracking \cite{bullo2004geometric,lee2008geometric,lee2015global}. Yet, these methods address internal system configuration constraints rather than agent navigation over environmentally constrained curved surfaces or collective spatial distribution control. Second, geodesic path-planning methods leverage intrinsic distances to generate valid or shortest paths on curved spaces \cite{wu2016path,chen2014smooth}, while primarily focusing on geometric path generation for individual agents without accounting for system dynamics or collective distribution control. Third, swarm and coverage-control methods shape agent configurations via Voronoi or density-based objectives \cite{cortes2004coverage,bullo2009distributed,krishnan2022multiscale}, but are predominantly Euclidean and do not preserve intrinsic manifold geometry. Thus, an optimal distribution-control framework accounting for intrinsic geometry, system dynamics, and computational efficiency remains undeveloped.

To resolve these limitations, we develop a local Riemannian realization of D$^2$OC by constructing the distribution target in the tangent space at the current agent state. The logarithmic map converts local reference samples of target distribution into tangent vectors, whose weighted average defines a virtual target for a finite-horizon LQR. The resulting control is then executed through the intrinsic second-order dynamics using parallel transport, while the receding-horizon update continuously shifts the tangent-space reference with the agent state. This construction retains the fixed-dimensional structure of LQR while providing explicit control over the geometric approximation introduced by the tangent-space reduction. We establish a geometric alignment condition under which a local discrete-time descent property holds, providing a formal criterion for manifold-based tracking control.

The main contributions of this paper are thus threefold:
1) a local Riemannian formulation of D$^2$OC ($R$-D$^2$OC) for second-order agents on curved spaces;
2) a rigorous analysis of approximation errors introduced by the tangent-space reduction under local curvature; and
3) a conditional discrete-descent guarantee for the resulting receding-horizon controller under a local velocity-alignment condition.

\section{Preliminaries and Problem Formulation}
\subsection{Density-Driven Optimal Control Framework}

The Euclidean density-driven optimal control framework steers a multi-agent system toward a prescribed spatial distribution represented by weighted reference samples. As described in \cite{seo2025d2oc,seo2026farm,lee2025connectivity,lee2026automatica}, the process consists of three iterative stages.
For a given coverage mission, a reference distribution is represented by sample points $\{q_j\}_{j=1}^{N}$ with uniform weights $1/N$.
In Stage A, each agent identifies a local subset of reference samples and computes a control input driving its motion toward the corresponding weighted spatial target. Stage B decreases the sample weights based on coverage progress, while Stage C exchanges updated weights among neighboring agents within communication range. Repeated execution enables progressive coverage of the reference distribution. Due to space limits, further details are omitted and can be found in the cited literature.

This paper focuses on Stage A. At each time step, an agent selects a local subset of reference samples and constructs a local target represented by their weighted barycenter. The agent then solves a finite-horizon optimal control problem to track this target while minimizing control effort. In Euclidean $D^2OC$, this problem reduces to standard linear-quadratic control since agent dynamics and targets are represented in Euclidean coordinates.
However, this construction relies on linear vector operations that are not intrinsically defined on a general Riemannian manifold. Consequently, local target construction and its LQR realization cannot be directly applied to agents moving on curved spaces. In what follows, we develop an intrinsic counterpart of Stage A using logarithmic maps and tangent-space dynamics.

\subsection{Density Representation on Manifold}

To lift the aforementioned Euclidean $D^2OC$ framework onto generic curved domains, we now formulate the density representation and optimal transport metrics on a smooth Riemannian manifold. Consider agent $i$ with configuration $x_i(t)\in\mathcal{M}$, where $(\mathcal{M},g)$ is a smooth $d$-dimensional Riemannian manifold. Let $v_i(t)\in T_{x_i(t)}\mathcal{M}$ denote its velocity. At planning step $k$, the agent considers its local subset of reference points $\mathcal{S}_i^k$. The local desired density for agent $i$ is represented by the finite positive measure
$
    \nu_i^k
    =
    \sum_{j\in\mathcal S_i^k}\pi_j\delta_{q_j},
    \;
    q_j\in\M,\quad \pi_j\geq0,
    \label{eq:measure}
$
with total local mass defined as
$
\alpha_i
=
\sum_{j\in\mathcal S_i^k}\pi_j
=
\frac{1}{M_i}
>0,
\label{eq:mass}
$
where $\pi_j$ represents the mass transport weight associated with reference point $q_j$, $\delta_{(\cdot)}$ denotes a Dirac measure, and $M_i$ stands for the fixed total agent capacity assigned to agent $i$. The symbol $M_i$ is determined by the ratio of total operation time to the sampling interval ($T_{\mathrm{total}}/\Delta t$), reflecting a uniform per-step energy consumption profile.

Extending the Wasserstein distance concept to the Riemannian setting, the optimal transport cost between the agent's current state (modeled as a Dirac measure at $x_i$ with mass $\alpha_i$) and the local reference measure $\nu_i^k$ is given by the squared Riemannian Wasserstein distance:
\begin{equation}
\textstyle
    \mathcal{W}_2^2(\alpha_i \delta_{x_i}, \nu_i^k)
    =
    \sum_{j\in\mathcal S_i^k}
    \pi_j\dist_g(x_i,q_j)^2,
    \label{eq:wasserstein}
\end{equation}
where $\dist_g(x_i, q_j)$ denotes the Riemannian distance between $x_i$ and $q_j$ induced by the metric tensor $g$ on the configuration manifold $M$.
Thus, the density mismatch naturally translates into a weighted sum of squared Riemannian distances over the agent's local sample set. 

To characterize the geometric center toward which the agent is steered, the corresponding local Riemannian barycenter for agent $i$ is defined as the minimizer of this transport cost:
\vspace{-.05in}
\begin{equation}
    \bar q_{R,i}
    \in
    \textstyle\argmin_{z\in\mathcal{U}}
    \frac12
    \sum_{j\in\mathcal S_i^k}
    \pi_j\dist_g(z,q_j)^2,
    \label{eq:barycenter}
\end{equation}
where $\mathcal{U}$ is a strongly convex neighborhood containing the relevant local sample points.

\subsection{Intrinsic Second-Order Dynamics}
For notational simplicity, the agent index $i$ is omitted hereafter.
Each agent obeys
\begin{equation}
    \dot x=v,
    \qquad
    \nabla_{\dot x}v=u,
    \qquad
    u(t)\in T_{x(t)}\M,
    \label{eq:dynamics}
\end{equation}
where $\nabla$ is the Levi--Civita connection, thus $u$ is the covariant
acceleration. We consider the Bolza problem
\begin{equation}
\small
\begin{aligned}
J
&=
\int_{t_0}^{t_f}
\left[
V(x(t))
+
\frac12\norm{u(t)}_{x(t)}^2
\right]dt
+
\Phi(x(t_f)),
\\
V(x)
&=
\frac12
\textstyle\sum_{j\in\mathcal{S}^k}
\pi_j\dist_g(x,q_j)^2,
\\
\Phi(x)
&=
\frac{w_f}{2}
\textstyle\sum_{j\in\mathcal{S}^k}
\pi_j\dist_g(x,q_j)^2,
\qquad w_f\geq0,
\end{aligned}
\label{eq:ocp}
\end{equation}
subject to \eqref{eq:dynamics} and a fixed initial state. The constant $w_f \ge 0$ penalizes the terminal distribution error relative to the running effort.

\section{Tangent-Space Reduction}

\subsection{Normal Coordinates}

To map global geometric entities from the Riemannian manifold $\mathcal{M}$ into a local Euclidean vector space suitable for linear control design, we utilize exponential and logarithmic maps. The exponential map $\Exp_{x_k} \colon T_{x_k}\mathcal M \to \mathcal M$ maps a tangent vector from the tangent space at $x_k$ to a point along the geodesic on $\mathcal M$. Its inverse, the logarithmic map $\Log_{x_k} \colon \mathcal M \to T_{x_k}\mathcal M$, projects points from the manifold back onto the tangent space $T_{x_k}\mathcal M$, where the norm of the tangent vector corresponds to the Riemannian distance along the minimizing geodesic.

At replanning step $k$, we define the local coordinates in the tangent space via the logarithmic map as
\begin{equation}
    \xi
    =
    \Log_{x_k}(x),
    \qquad
    \eta_j
    =
    \Log_{x_k}(q_j).
    \label{eq:normal_coordinates}
\end{equation}
By choosing an orthonormal basis of $T_{x_k}\mathcal M$, we explicitly identify this tangent space with $\R^d$.

For each $x$ in the local normal neighborhood, let
$\mathcal P_{x\rightarrow x_k}$ denote parallel transport along the
unique radial geodesic
\[
s\mapsto \Exp_{x_k}(s\xi),
\qquad s\in[0,1].
\]
Define the transported velocity
$
    \zeta
    =
    \mathcal P_{x\rightarrow x_k}v
    \in T_{x_k}\M.
\label{eq:transported_velocity}
$

Assume the sectional curvature satisfies $|\operatorname{Sec}|\leq\kappa$
on a compact normal neighborhood containing the local trajectory and
the relevant target samples. To characterize the linearization error on the manifold, consider the radial geodesic $\gamma(s) = \Exp_{x_k}(s\xi)$ for $s \in [0,1]$. For any direction $v \in T_{x_k}M$, the action of the differential $\D\Exp_{x_k}(\xi)v$ is represented by a Jacobi field $J(s)$ along $\gamma(s)$ with $J(0)=0$ and $\nabla_{\dot{\gamma}}J(0)=v$. Integrating the Jacobi equation $\nabla_{\dot{\gamma}}^2 J + R(J, \dot{\gamma})\dot{\gamma} = 0$ yields the integral form
\begin{equation}
J(1) = v - \int_0^1 (1-s) R(J(s), \dot{\gamma})\dot{\gamma} \, ds.
\end{equation}
Noting that $J(1) = \D\Exp_{x_k}(\xi)v$, applying parallel transport $\mathcal{P}_{x\rightarrow x_k}$ along $\gamma$ and subtracting $v = Iv$ gives
\begin{equation}
    \begin{aligned}
    &\left( \mathcal{P}_{x\rightarrow x_k}\D\Exp_{x_k}(\xi) - I \right)v = \\
    &\qquad-\int_0^1 (1-s) \mathcal{P}_{\gamma(s)\rightarrow x_k} R(J(s), \dot{\gamma})\dot{\gamma} \, ds.
    \end{aligned}
\end{equation}

Taking the norm on both sides and using the sectional curvature bound $|\operatorname{Sec}|\leq\kappa$, the geodesic speed $\|\dot{\gamma}\| = \|\xi\|$, and the Jacobi field estimate $\|J(s)\| \le C_0 \|v\|$ (where $C_0 > 0$ is a local constant bounded near unity), the integral term is bounded by $C_1 \kappa \|\xi\|^2 \|v\|$ for a constant $C_1 > 0$ depending on $C_0$. Taking the supremum over $\|v\| = 1$ leads directly to the operator norm estimate
\begin{equation}
    \norm{
    \mathcal P_{x\rightarrow x_k}
    \D\Exp_{x_k}(\xi)
    -
    I
    }
    \le
    C_1\kappa\norm{\xi}^2.
    \label{eq:dexp_bound}
\end{equation}
To understand the dynamic behavior in the tangent space $T_{x_k}M$, recall that $\xi = \Log_{x_k}(x)$ and its velocity variable in the tangent space is defined via parallel transport as $\zeta = \mathcal{P}_{x \rightarrow x_k} \dot{x}$. Differentiating $\xi$ with respect to time yields $\dot{\xi} = \D\Log_{x_k}(x) \dot{x} = \D\Log_{x_k}(x) \mathcal{P}_{x_k \rightarrow x} \zeta$. Since the logarithmic map is the inverse of the exponential map, its differential satisfies $\D\Log_{x_k}(x) = (\D\Exp_{x_k}(\xi))^{-1}$. Applying the operator norm estimate from \eqref{eq:dexp_bound} to this inverse differential and utilizing Neumann series expansion, the leading-order linear mapping reduces to the identity operator $I$, while the curvature-induced perturbation is collected in the residual term $r_\xi$. Consequently, after possibly shrinking the neighborhood,
\begin{equation}
    \dot\xi
    =
    \zeta+r_\xi,
    \label{eq:xi}
\end{equation}
with
$
    \norm{r_\xi}
    \le
    C_2\kappa
    \norm{\xi}^2
    \norm{\zeta}.
    \label{eq:xi_bound}
$

Similarly, consider the acceleration dynamics under the intrinsic covariant derivative $\nabla_{\dot{x}} \dot{x} = u$. Differentiating the parallel-transported velocity $\zeta = \mathcal P_{x\rightarrow x_k} \dot{x}$ along the state trajectory $x(t)$ requires accounting for the curvature-induced non-commutativity of parallel transport along distinct paths (the path along the trajectory versus the radial geodesic connecting $x(t)$ to $x_k$). Evaluating this geometric drift via standard Jacobi-field and curvature tensor estimates along the geodesic triangle formed by $x_k$, $x(t)$, and $x(t+\Delta t)$ yields
\begin{equation}
    \dot\zeta
    =
    u_0+r_\zeta,
    \label{eq:zeta}
\end{equation}
where $u_0=\mathcal P_{x\rightarrow x_k}u$ represents the parallel-transported control input, and the residual acceleration error $r_\zeta$ is bounded by
$
    \norm{r_\zeta}
    \le
    C_3
    \left(
    \kappa\norm{\xi}\norm{\zeta}^2
    +
    \kappa\norm{\xi}^2\norm{u_0}
    \right).
    \label{eq:zeta_bound}
$

The constants $C_1,C_2,C_3$ depend only on the selected compact neighborhood and uniform bounds on the relevant geometric quantities, confirming that the Euclidean double integrator $(\dot{\xi} = \zeta, \dot{\zeta} = u_0)$ serves as the exact leading-order local model in the fixed tangent space $T_{x_k}M$.

\subsection{Local Expansion of the Density Objective}

For $\xi,\eta_j$ in a sufficiently small common normal neighborhood,
the squared Riemannian distance admits the local expansion
\begin{equation}
\small
\begin{aligned}
\frac12
\dist_g
\left(
\Exp_{x_k}(\xi),
\Exp_{x_k}(\eta_j)
\right)^2
&=
\frac12
\norm{\xi-\eta_j}_{x_k}^2
+
R_j(\xi,\eta_j).
\end{aligned}
\label{eq:distance_expansion}
\end{equation}
In normal coordinates centered at $x_k$, the squared Riemannian distance has a Euclidean quadratic leading term and a fourth-order curvature correction. Thus, for $\xi$ and $\eta_j$ in a sufficiently small common normal neighborhood, there exists $C_4>0$, depending only on the neighborhood, such that
\begin{equation}
    |R_j(\xi,\eta_j)|
    \le
    C_4\kappa
    \left(
    \norm{\xi}
    +
    \norm{\eta_j}
    \right)^4,
\label{eq:distance_remainder}
\end{equation}
with $C_4$ depending only on the chosen neighborhood. The remainder
vanishes identically when the neighborhood is flat. Therefore,
\begin{equation}
\small
V(x)
=
\frac12
\sum_{j\in\mathcal{S}^k}
\pi_j
\norm{\xi-\eta_j}^2
+
R_V(\xi),
\label{eq:potential_expansion}
\end{equation}
\vspace{-.2in}
\begin{flalign}
&\text{where }
    |R_V(\xi)|
    \le
    C_4\kappa
    \sum_{j\in\mathcal{S}^k}
    \pi_j
    \left(
    \norm{\xi}
    +
    \norm{\eta_j}
    \right)^4.&
\label{eq:potential_remainder}
\end{flalign}

Defining the weighted tangent-space center
$\eta_{\mathrm{lin}}=\frac1{\alpha}\sum_j\pi_j\eta_j$, we have
$
\frac12
\sum_j
\pi_j
\norm{\xi-\eta_j}^2
=
\frac{\alpha}{2}
\norm{\xi-\eta_{\mathrm{lin}}}^2
+
c_k,
\label{eq:complete_square}
$
where $c_k=\frac12\sum_j\pi_j\norm{\eta_j-\eta_{\mathrm{lin}}}^2$ is
independent of $\xi$. The resulting tangent-space virtual target is $\eta_{\mathrm{lin}}$, which is mapped back to the manifold as
\begin{equation}
\tilde{q}_R
=
\operatorname{Exp}_{x_k}(\eta_{\mathrm{lin}}).
\label{eq:virtual_target}
\end{equation}

\begin{remark}
Note that $\eta_{\mathrm{lin}}$ serves as a tangent-space virtual target, which coincides with the true Riemannian barycenter in flat space but differs under nonzero curvature.
\end{remark}
\section{Tangent-Space Target Aggregation and LQR Control}
\label{sec:local_lqr}
\subsection{Local LQR Controller}

Neglecting the local geometric remainders in
\eqref{eq:xi}--\eqref{eq:potential_remainder} gives the tangent-space
approximation
\begin{equation}
\begin{aligned}
\min_{u_0(\cdot)}
\quad
J_{\mathrm{loc}}
&=
\frac12
\int_{t_k}^{t_f}
\left[
\alpha
\norm{\xi-\eta_{\mathrm{lin}}}^2
+
\norm{u_0}^2
\right]dt
\\
&\quad+
\frac{w_f\alpha}{2}
\norm{\xi(t_f)-\eta_{\mathrm{lin}}}^2,
\\
\text{s.t.}\qquad \dot\xi&=\zeta,
\quad
\dot\zeta=u_0,
\end{aligned}
\label{eq:local_lqr_cont}
\end{equation}
where the terminal weight follows directly from \eqref{eq:ocp}. Define the tracking error $e=\xi-\eta_{\mathrm{lin}}$ and state vector $y=[e^\top,\zeta^\top]^{\top}$. Since $\eta_{\mathrm{lin}}$ is fixed during one replanning horizon, the resulting nominal linear error dynamics satisfy
$
    \dot y
    =
    Ay+Bu_0,
\label{eq:continuous_linear}
$
where
$
\tiny
A=
\begin{bmatrix}
0&I\\
0&0
\end{bmatrix},
\,
B=
\begin{bmatrix}
0\\
I
\end{bmatrix}.
\label{eq:AB}
$
An exact zero-order-hold discretization with step $\Delta t$ gives
$y_{m+1}=A_dy_m+B_du_m$  with
$
\tiny
A_d=
\begin{bmatrix}
I&\Delta t I\\
0&I
\end{bmatrix},
\,
B_d=
\begin{bmatrix}
\frac12\Delta t^2I\\
\Delta t I
\end{bmatrix}
\label{eq:AdBd}
$
at discrete time index $m$.
Approximating the running integral gives
\begin{equation}
J_d
=
\frac12
y_T^\top Q_fy_T
+
\sum_{m=0}^{T-1}
\frac12
\left(
y_m^\top\bar Qy_m
+
u_m^\top\bar Ru_m
\right),
\label{eq:discrete_cost}
\end{equation}
with
$\bar Q=\Delta t\,\mathrm{diag}(\alpha I,0)$,
$\bar R=\Delta t I$, and
$Q_f=\mathrm{diag}(w_f\alpha I,0)$, thus $P_T=Q_f$.
Backward dynamic programming gives $u_m^*=-K_my_m$ with
\begin{equation}
\begin{aligned}
    S_m
    &=
    \bar R
    +
    B_d^\top P_{m+1}B_d,
    \qquad
    K_m
    =
    S_m^{-1}
    B_d^\top P_{m+1}A_d,\\
P_m
&=
\bar Q
+
A_d^\top P_{m+1}A_d
-
A_d^\top P_{m+1}B_d
S_m^{-1}
B_d^\top P_{m+1}A_d .
\end{aligned}
\label{eq:riccati}
\end{equation}
Because $\bar R\succ0$ and $P_{m+1}\succeq0$, $S_m\succ0$, the backward recursion is well-defined.

\begin{remark}[Fixed-Dimensional Computational Complexity]
\label{rem:complexity}
For a horizon $T$, solving the discrete Riccati recursion requires $\mathcal{O}(Td^3)$ operations, as the state dimension of $y$ depends solely on the manifold dimension $d$. Meanwhile, computing the tangent-space target $\eta_{\mathrm{lin}}$ via weighted summation requires $\mathcal{O}(|\mathcal{S}^k|d)$ operations. Consequently, the target sample size $|\mathcal{S}^k|$ affects only the linear vector aggregation, leaving the matrix Riccati state dimension completely invariant to sample size.
\end{remark}
\subsection{Receding-Horizon Implementation for $R$-D$^2$OC}

At replanning step $k$, the local control command is synthesized in the tangent space $T_{x_k}\mathcal{M}$ and updated recursively. Given the current manifold state $x_k \in \mathcal{M}$ and a set of discrete target points $\{q_j\}$, we first compute the logarithmic map coordinates $\eta_j^k = \mathrm{Log}_{x_k}(q_j) \in T_{x_k}\mathcal{M}$. The weighted spatial centroid in $T_{x_k}\mathcal{M}$ is evaluated as $\eta_{\mathrm{lin}}^k = \frac{1}{\alpha}\sum_{j}\pi_j\eta_j^k$ with total weight $\alpha = \sum_{j}\pi_j$. 

Since $x_k$ serves as the origin of the chosen orthonormal basis for $T_{x_k}\mathcal{M}$, the initial state vector for the local LQR policy is formulated as $y_k = [(-\eta_{\mathrm{lin}}^k)^{\top}, v_k^{\top}]^{\top}$, where $v_k \in T_{x_k}\mathcal{M}$ is the current velocity vector. The feedback acceleration command is then obtained as $u_k = -K_0 y_k \in T_{x_k}\mathcal{M}$. 

To execute this control over the interval $t \in [t_k, t_k+\Delta t]$, $u_k$ is continuously pushed forward along the trajectory $x(t)$ via parallel transport, $u(t) = \mathcal{P}_{x_k \rightarrow x(t)} u_k$, driving the intrinsic system dynamics $\dot x = v$ and $\nabla_{\dot x}v = u(t)$. A local second-order numerical update over the time step $\Delta t$ is then used to advance the state on $\mathcal{M}$:
\begin{equation}
\begin{aligned}
x_{k+1} = \mathrm{Exp}_{x_k}\left( \Delta t v_k + \frac{1}{2}\Delta t^2 u_k \right), \\
v_{k+1} = \mathcal{P}_{x_k \rightarrow x_{k+1}}\left( v_k + \Delta t u_k \right).
\end{aligned}\label{eq:position_update}
\end{equation}
This update provides a local approximation of the intrinsic dynamics over each sampling interval. The tangent-space origin is then shifted from $x_k$ to $x_{k+1}$ at each step to complete the receding-horizon control loop.

\section{Theoretical Analysis: Approximation Bias and Local Conditional Descent}
\label{sec:theoretical_analysis}

\subsection{Objective Approximation Error}
 
Define the local objective approximation error at an arbitrary
trajectory point
\begin{equation}
\small
E(x;x_k)
=
V(x)
-
\frac12
\textstyle\sum_j
\pi_j
\norm{
\Log_{x_k}(x)-\eta_j^k
}_{x_k}^2,
\label{eq:Ek_definition}
\end{equation}
where $\eta_j^k=\Log_{x_k}(q_j)$. This is exactly the gap between the
true running cost and the quadratic surrogate the local LQR controller
actually minimizes. 
 
\begin{proposition}[Local objective error]
\label{prop:objective_error}
Suppose $x$ and all target samples belong to a common sufficiently
small normal neighborhood of $x_k$. Then
\begin{equation}
|E(x;x_k)|
\le
C_4\kappa\alpha
\left(
\norm{\xi}_{x_k}
+
\sigma_k^{\mathrm{tar}}
\right)^4,
\label{eq:objective_error_bound}
\end{equation}
where $\xi=\Log_{x_k}(x)$ for the local
coordinate of $x$ and $\sigma_k^{\mathrm{tar}}=\max_j\norm{\eta_j^k}_{x_k}$.
\end{proposition}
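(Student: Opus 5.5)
The plan is to reduce the statement directly to the pointwise distance expansion \eqref{eq:distance_expansion}--\eqref{eq:distance_remainder} and then aggregate over the local sample set using the mass identity $\sum_j\pi_j=\alpha$.

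\textbf{Step 1: rewrite $E$ as a weighted sum of remainders.} Since $x$ and every $q_j$ lie in a common sufficiently small normal neighborhood of $x_k$, we have $x=\Exp_{x_k}(\xi)$ and $q_j=\Exp_{x_k}(\eta_j^k)$, where $\xi=\Log_{x_k}(x)$ and $\eta_j^k=\Log_{x_k}(q_j)$. Substituting the definition of $V$ from \eqref{eq:ocp} into \eqref{eq:Ek_definition}, and applying \eqref{eq:distance_expansion} term by term, gives
\begin{equation*}
E(x;x_k)=\sum_{j\in\mathcal S^k}\pi_j\Big[\tfrac12\dist_g(x,q_j)^2-\tfrac12\norm{\xi-\eta_j^k}_{x_k}^2\Big]=\sum_{j\in\mathcal S^k}\pi_j R_j(\xi,\eta_j^k).
\end{equation*}
Equivalently, $E=R_V(\xi)$ in \eqref{eq:potential_expansion}.

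\textbf{Step 2: bound each remainder uniformly in $j$.} The triangle inequality and $\pi_j\ge0$ give $|E|\le\sum_j\pi_j|R_j|$. By \eqref{eq:distance_remainder},
\begin{equation*}
|R_j|\le C_4\kappa\big(\norm{\xi}+\norm{\eta_j^k}\big)^4 .
\end{equation*}
For every $j$ we have $\norm{\eta_j^k}_{x_k}\le\sigma_k^{\mathrm{tar}}$, and $t\mapsto(\norm{\xi}+t)^4$ is monotone increasing for $t\ge0$. Hence each term is at most $C_4\kappa(\norm{\xi}+\sigma_k^{\mathrm{tar}})^4$.

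\textbf{Step 3: sum over the sample set.} This common bound does not depend on $j$, so summing against the weights gives
\begin{equation*}
|E|\le C_4\kappa\Big(\sum_j\pi_j\Big)\big(\norm{\xi}+\sigma_k^{\mathrm{tar}}\big)^4 .
\end{equation*}
Using $\sum_j\pi_j=\alpha$ from \eqref{eq:mass} yields \eqref{eq:objective_error_bound}.

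\textbf{Main obstacle.} The only nontrivial content is the validity of \eqref{eq:distance_remainder} itself: one needs a single constant $C_4$ that works uniformly for all pairs $(\xi,\eta_j^k)$ in the chosen neighborhood. I take that estimate as given from the earlier section.

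Two hypotheses make it applicable here:
\begin{itemize}
\item the neighborhood must be within the injectivity radius, so that $\Log_{x_k}$ is defined and $\dist_g(\Exp_{x_k}\xi,\Exp_{x_k}\eta)$ is smooth in $(\xi,\eta)$;
\item the curvature bound $|\operatorname{Sec}|\le\kappa$ must hold on that neighborhood.
\end{itemize}
Both are assumed in the proposition's hypothesis. With them in place, the argument is pure bookkeeping, and I expect no further difficulty.
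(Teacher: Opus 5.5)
Your proposal is correct and takes the same approach as the paper. The paper invokes the aggregated bound \eqref{eq:potential_remainder} directly (i.e., $E=R_V(\xi)$), replaces each $\norm{\eta_j^k}$ by $\sigma_k^{\mathrm{tar}}$, and uses $\sum_j\pi_j=\alpha$; your Steps 1--3 do exactly this, except that you carry out the per-sample aggregation from \eqref{eq:distance_remainder} yourself, and like the paper you take the uniform constant $C_4$ as given.
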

 
\begin{proof}
From \eqref{eq:potential_remainder},
$
|E(x;x_k)|
\le
C_4\kappa
\sum_j
\pi_j
\left(
\norm{\xi}
+
\norm{\eta_j^k}
\right)^4$
$\le
C_4\kappa\alpha
\left(
\norm{\xi}
+
\sigma_k^{\mathrm{tar}}
\right)^4.
$
\end{proof}
 
This is a pointwise approximation, not a cumulative convergence result. A cumulative bound requires additional assumptions controlling target spread and prediction error along a trajectory. 

\subsection{Bias of the Tangent-Space Center}

To characterize the Riemannian barycenter $\bar q_R$ in \eqref{eq:barycenter}, define the weighted Fréchet energy \cite{absil2008optimization} as
$\mathcal F(x)=\frac12\sum_j\pi_j\dist_g(x,q_j)^2$,
whose critical point satisfies $\grad\mathcal F(\bar q_R)=0$.
Although $\mathcal F$ is identical to the state cost $V$ in \eqref{eq:ocp}, we use $\mathcal F$ here to emphasize its role in the barycenter analysis. To quantitatively compare $\tilde q_R$ in \eqref{eq:virtual_target} with $\bar q_R$, we require $\mathcal F$ to be locally strongly convex near the barycenter. Otherwise, their intrinsic distance cannot be stably controlled.

\begin{assumption}[Nondegenerate local barycenter]
\label{ass:barycenter}
There exists a strongly convex neighborhood $\mathcal U_B$ containing
the reference point $x_0$, all target samples $q_j$, the tangent-space
virtual target $\tilde q_R$, and the Riemannian barycenter $\bar q_R$,
such that the minimizing geodesics used below remain in $\mathcal U_B$
and
\begin{equation}
    \operatorname{Hess}\mathcal F(x)
    \succeq
    \lambda_{\min}I,
    \qquad
    x\in\mathcal U_B,
\label{eq:hess_lower}
\end{equation}
for some $\lambda_{\min}>0$.
\end{assumption}
\begin{remark}
Assumption~\ref{ass:barycenter} is standard in Riemannian optimization \cite{absil2008optimization}. As $\mathcal F$ is a weighted sum of squared distance functions, $\operatorname{Hess}\mathcal F(x) = \alpha I + \mathcal O(\kappa \alpha \rho_{\max}^2)$ with $\alpha = \sum_j \pi_j$. Thus, strong convexity naturally holds within a sufficiently small normal neighborhood.
\end{remark}

\begin{lemma}[Local change-of-base-point estimate]
\label{lem:log_change_base}
Let $x_0$ and $q$ belong to a sufficiently small strongly convex
normal neighborhood, $\eta=\Log_{x_0}(q)$, $\delta\in T_{x_0}\M$,
$\tilde x=\Exp_{x_0}(\delta)$. Assume $|\operatorname{Sec}|\le\kappa$
on the neighborhood. Then, there exists $C_{5}>0$, independent of
$\norm{\eta}+\norm{\delta}$, such that
\begin{equation}
\begin{aligned}
\mathcal P_{\tilde x\rightarrow x_0}
\Log_{\tilde x}(q)
&=
\eta-\delta+r(\eta,\delta),
\\
\norm{r(\eta,\delta)}_{x_0}
&\le
C_{5}\kappa
\left(
\norm{\eta}
+
\norm{\delta}
\right)^3.
\label{eq:log_change_base}
\end{aligned}
\end{equation}
\end{lemma}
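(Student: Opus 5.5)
We propose to prove the lemma by comparing the geodesic from $\tilde x$ to $q$ with the Euclidean difference $\eta-\delta$ in normal coordinates at $x_0$. The starting observation is that in the flat case $\Log_{\tilde x}(q)$, transported back to $x_0$, is exactly $\eta-\delta$. The whole task is therefore to show that the curvature defect is cubic in $\rho:=\norm{\eta}+\norm{\delta}$ and linear in $\kappa$.

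Fix normal coordinates at $x_0$, identified with $\R^d$ by an orthonormal frame. In these coordinates the metric satisfies $g_{ab}(z)=\delta_{ab}-\tfrac13 R_{acbd}z^cz^d+O(\norm{z}^3)$. On a compact neighborhood with $|\operatorname{Sec}|\le\kappa$, and with the covariant derivatives of $R$ controlled as the paper already assumes for $C_1$–$C_4$, the Christoffel symbols obey $|\Gamma(z)|\le C\kappa\norm{z}$.

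\textbf{Step 1 (the log vector).} Let $w:=\Log_{\tilde x}(q)\in T_{\tilde x}\M$, and let $c(s)$ be the geodesic from $\tilde x$ to $q$. In coordinates, $c(s)=\delta+s w+O(\int\int|\Gamma||\dot c|^2)$. Since $\norm{c(s)}\lesssim\rho$ and $\norm{\dot c}\lesssim\rho$, the geodesic equation gives $\norm{c(1)-\delta-w}\le C\kappa\rho\cdot\rho^2$. Because $c(1)=\eta$, the coordinate expression of $w$ equals $\eta-\delta+O(\kappa\rho^3)$. This is just the Taylor expansion of $\Log$ in normal coordinates, the same kind of estimate that underlies \eqref{eq:dexp_bound} and \eqref{eq:distance_expansion}.

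\textbf{Step 2 (parallel transport back to $x_0$).} Transport $w$ along the radial geodesic $s\mapsto s\delta$. In coordinates, $\frac{d}{ds}W^a=-\Gamma^a_{bc}(s\delta)\delta^bW^c$, and hence $\norm{W(0)-W(1)}\le C\kappa\norm{\delta}^2\norm{w}\le C\kappa\rho^3$. We also need the coordinate components at $x_0$ to be orthonormal components, which holds because $g(0)=I$. At $\tilde x$, however, the coordinate components differ from orthonormal ones by $O(\kappa\norm{\delta}^2)$. Since we only work with coordinate vectors and transport them, this discrepancy is already absorbed by the transport ODE and causes no extra error.

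\textbf{Step 3 (combine).} Putting the two steps together gives $\mathcal P_{\tilde x\to x_0}\Log_{\tilde x}(q)=\eta-\delta+r$ with $\norm{r}\le C_5\kappa\rho^3$. The constant $C_5$ depends only on the neighborhood, via the bounds on $\Gamma/\kappa$ and the smallness of $\rho$.

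\textbf{Main obstacle.} The delicate step is justifying $|\Gamma(z)|\le C\kappa\norm{z}$ with $C$ independent of $\rho$. This requires $\Gamma(0)=0$ together with a Jacobi-field comparison: the metric in normal coordinates is governed by Jacobi fields, which Rauch comparison with $|\operatorname{Sec}|\le\kappa$ controls. The resulting bound is linear in $\kappa$ only up to higher-order terms $O(\kappa^2\rho^2)$, which are absorbed on a sufficiently small neighborhood.

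As an alternative, we could argue intrinsically. Consider the map $F(\delta)=\mathcal P_{\tilde x\to x_0}\Log_{\tilde x}(q)$ and compute its derivative through Jacobi fields exactly as in \eqref{eq:dexp_bound}. Its deviation from $-I$ is $O(\kappa\rho^2)$, and integrating from $\delta=0$, where $F(0)=\eta$, yields the same estimate.
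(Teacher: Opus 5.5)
Your argument is correct, but it takes a different route from the paper. The paper's proof is only a sketch: it cites the standard third-order change-of-base-point expansion, obtained intrinsically from the Jacobi-field expansion of $\Exp$ and the variation of radial parallel transport. You keep the same two-part split (compute $\Log_{\tilde x}(q)$, then transport it back along $s\mapsto\Exp_{x_0}(s\delta)$), but you carry out both parts as ODE estimates in normal coordinates at $x_0$. First, the geodesic equation for $c$, with $\norm{c}\le 2\rho$ and $\norm{\dot c}\lesssim\rho$, gives the coordinate components of $\Log_{\tilde x}(q)$ as $\eta-\delta+O(\kappa\rho^3)$. Second, the transport ODE moves those components by $O(\kappa\norm{\delta}^2\rho)$, and they arrive as orthonormal components because $g(0)=I$. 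Your remark that the non-orthonormal coordinate frame at $\tilde x$ costs nothing is exactly right. What this buys is an explicit, elementary proof in place of the paper's citation-style sketch. The price is regularity, and your \emph{Main obstacle} paragraph puts the difficulty in the wrong place. Step 2 only uses the radial contraction $\Gamma(s\delta)(\delta,\cdot)$, which Jacobi fields control under $|\operatorname{Sec}|\le\kappa$ alone, since along a radial geodesic the coordinate fields are $J/s$. Step 1, however, needs the full bound $|\Gamma(z)|\le C\kappa\norm{z}$ along the non-radial geodesic $c$. Rauch comparison controls the normal-coordinate metric itself, not its derivatives transverse to the radial direction. The $O(\norm{z}^2)$ part of $\Gamma$ involves $\nabla R$, not just $O(\kappa^2\rho^2)$ terms, so your $C_5$ depends on $\sup\norm{\nabla R}/\kappa$ over the neighborhood. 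The lemma allows this, since it only requires independence from $\norm{\eta}+\norm{\delta}$, and you assumed control of $\nabla R$ at the outset. So this is a wrongly attributed justification, not a gap. Your closing alternative is essentially the paper's route: differentiate $F(\delta)$ using Jacobi fields along $c$ and the first variation of parallel transport, then integrate from $F(0)=\eta$. It uses only $R$ along geodesics, so its constant needs no bound on $\nabla R$, at the cost of a less elementary computation.
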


\begin{proof}
This is the local third-order change-of-base-point expansion for the
Riemannian logarithmic map, following from the Jacobi-field expansion
of the exponential map and the variation of radial parallel transport.
The Euclidean relation $\eta-\delta$ is exact when curvature vanishes.
The first nonzero curvature correction is cubic in the local
displacement, with uniform remainder on a compact strongly convex
neighborhood.
\end{proof}

This local expansion serves as a key technical tool in our framework, enabling us to compare $\operatorname{grad}\mathcal{F}$ evaluated at two distinct base points within the same vector space, up to a controlled cubic error.

\begin{theorem}[Local curvature bias]
\label{thm:bias}
Let $\rho_{\max}=\max_j\norm{\Log_{x_0}(q_j)}_{x_0}$,
$\eta_{\mathrm{lin}}=\frac{1}{\alpha}\sum_j\pi_j\Log_{x_0}(q_j)$,
$\tilde q_R=\Exp_{x_0}(\eta_{\mathrm{lin}})$. Suppose the conditions of
Assumption~\ref{ass:barycenter} hold with some $\lambda_{min}>0$, $|\operatorname{Sec}|\le\kappa$
on $\mathcal U_B$, and $\rho_{\max}\le\rho_*$ for a sufficiently small
$\rho_*>0$. Then, there exists $C_6>0$, independent of $\rho_{\max}$,
such that
\begin{equation}
\dist_g(\tilde q_R,\bar q_R)
\le
C_6
\frac{\kappa\alpha}{\lambda_{\min}}
\rho_{\max}^3.
\label{eq:bias}
\end{equation}
\end{theorem}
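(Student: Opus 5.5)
The plan is to evaluate the Riemannian gradient of $\mathcal F$ at the tangent-space virtual target $\tilde q_R$, show that it is small (of order $\kappa\alpha\rho_{\max}^3$), and then use the strong convexity in Assumption~\ref{ass:barycenter} to convert this gradient bound into a distance bound to the true barycenter $\bar q_R$.

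\textbf{Step 1: gradient at $\tilde q_R$.} On the strongly convex neighborhood $\mathcal U_B$, the gradient of the squared distance is $\grad\frac12\dist_g(\cdot,q_j)^2(x)=-\Log_x(q_j)$. Hence
\[
\grad\mathcal F(\tilde q_R)=-\textstyle\sum_j\pi_j\Log_{\tilde q_R}(q_j).
\]
Transport this vector back to $T_{x_0}\M$ along the radial geodesic, and apply Lemma~\ref{lem:log_change_base} with $\delta=\eta_{\mathrm{lin}}$ and $\eta=\eta_j=\Log_{x_0}(q_j)$. This gives
\[
\mathcal P_{\tilde q_R\rightarrow x_0}\grad\mathcal F(\tilde q_R)
=-\textstyle\sum_j\pi_j(\eta_j-\eta_{\mathrm{lin}})-\sum_j\pi_j r(\eta_j,\eta_{\mathrm{lin}}).
\]
The first sum vanishes exactly by the definition of $\eta_{\mathrm{lin}}$. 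Moreover, $\norm{\eta_{\mathrm{lin}}}\le\rho_{\max}$ because $\eta_{\mathrm{lin}}$ is a convex combination of the $\eta_j$. Parallel transport is an isometry, so
\[
\norm{\grad\mathcal F(\tilde q_R)}\le C_5\kappa\textstyle\sum_j\pi_j(2\rho_{\max})^3=8C_5\kappa\alpha\rho_{\max}^3.
\]
Taking $\rho_*$ small keeps $\tilde q_R$ and all $q_j$ in the neighborhood where the lemma applies.

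\textbf{Step 2: strong convexity to distance.} Let $\gamma:[0,1]\to\mathcal U_B$ be the minimizing geodesic from $\bar q_R$ to $\tilde q_R$, and set $D=\dist_g(\tilde q_R,\bar q_R)=\norm{\dot\gamma}$. Define $\phi(s)=\langle\grad\mathcal F(\gamma(s)),\dot\gamma(s)\rangle$. Then $\phi'(s)=\operatorname{Hess}\mathcal F(\gamma(s))[\dot\gamma,\dot\gamma]\ge\lambda_{\min}D^2$ by \eqref{eq:hess_lower}. Since $\grad\mathcal F(\bar q_R)=0$, we have $\phi(0)=0$, and therefore
\[
\lambda_{\min}D^2\le\phi(1)\le\norm{\grad\mathcal F(\tilde q_R)}\,D.
\]
Dividing by $D$ (the case $D=0$ is trivial) yields $D\le 8C_5\kappa\alpha\rho_{\max}^3/\lambda_{\min}$, which is \eqref{eq:bias} with $C_6=8C_5$.

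\textbf{Main obstacle.} The delicate point is Step 1, namely making sure Lemma~\ref{lem:log_change_base} applies uniformly. This requires that $\tilde q_R$ stays in the neighborhood where $\Log_{\tilde q_R}$ is defined and smooth and the constant $C_5$ is uniform. That is why $\rho_{\max}\le\rho_*$ is imposed. Otherwise the argument is an exact cancellation of the first-order terms followed by a standard strong-convexity (Polyak-type) inequality along a geodesic inside $\mathcal U_B$.
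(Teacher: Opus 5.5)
Your proposal is correct and follows the paper's proof essentially step for step. You use the same ingredients: the change-of-base-point lemma applied with $\delta=\eta_{\mathrm{lin}}$, the exact cancellation $\sum_j\pi_j(\eta_j-\eta_{\mathrm{lin}})=0$ giving $\norm{\grad\mathcal F(\tilde q_R)}\le 8C_5\kappa\alpha\rho_{\max}^3$, and the strong-convexity argument along the geodesic from $\bar q_R$ to $\tilde q_R$, which yields $C_6=8C_5$. Your scalar function $\phi(s)=\langle\grad\mathcal F(\gamma(s)),\dot\gamma(s)\rangle$ is only a cleaner packaging of the paper's parallel-transported fundamental-theorem-of-calculus step; the two are equivalent because $\dot\gamma$ is parallel along $\gamma$.
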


\begin{proof}
Let $\eta_j=\Log_{x_0}(q_j)$. By definition,
$\norm{\eta_j}\le\rho_{\max}$ and $\norm{\eta_{\mathrm{lin}}}\le\rho_{\max}$.
Applying Lemma~\ref{lem:log_change_base} with $\delta=\eta_{\mathrm{lin}}$,
$\tilde x=\tilde q_R$, for each $j$, we have
\begin{equation}
\mathcal P_{\tilde q_R\rightarrow x_0}
\Log_{\tilde q_R}(q_j)
=
\eta_j-\eta_{\mathrm{lin}}+r_j,
\;
\norm{r_j}_{x_0}
\le
8C_{5}\kappa\rho_{\max}^3.
\label{eq:log_expansion}
\end{equation}
By the same first-variation formula \cite{absil2008optimization} applied at
$\tilde q_R$, $\grad\mathcal F(\tilde q_R)=
-\sum_j\pi_j\Log_{\tilde q_R}(q_j)$. Parallel transporting to
$T_{x_0}\M$ and using \eqref{eq:log_expansion},
$
\mathcal P_{\tilde q_R\rightarrow x_0}
\grad\mathcal F(\tilde q_R)
=
-\sum_j
\pi_j
\left(
\eta_j-\eta_{\mathrm{lin}}+r_j
\right)
=
-\sum_j
\pi_jr_j,
$
due to the fact $\sum_j\pi_j(\eta_j-\eta_{\mathrm{lin}})=0$. 
Since the parallel transport preserves the Riemannian norm, we have
\begin{equation}
    \|\operatorname{grad}\mathcal{F}(\tilde{q}_R)\|_{\tilde{q}_R}
    \le
    8C_5 \kappa \alpha \rho_{\max}^3.
\label{eq:gradient_bias}
\end{equation}

Let $\gamma:[0,1]\rightarrow\M$ be the minimizing geodesic from
$\bar q_R$ to $\tilde q_R$ with constant speed, remaining in
$\mathcal U_B$ by Assumption~\ref{ass:barycenter}. Since
$\grad\mathcal F(\bar q_R)=0$, parallel transport along $\gamma$ and
the fundamental theorem of calculus give
\begin{equation}
\mathcal P_{\tilde q_R\rightarrow\bar q_R}
\grad\mathcal F(\tilde q_R)
=
\int_0^1
\mathcal P_{\gamma(s)\rightarrow\bar q_R}
\left[
\operatorname{Hess}\mathcal F(\gamma(s))
\dot\gamma(s)
\right]ds .
\end{equation}
Because $\dot\gamma(s)$ is parallel along the geodesic,
Assumption~\ref{ass:barycenter} gives
\vspace{-.15in}
\begin{equation}
    \begin{aligned}
\inner{
\mathcal P_{\tilde q_R\rightarrow\bar q_R}
\grad\mathcal F(\tilde q_R)
}{
\dot\gamma(0)
}_{\bar q_R}
&\ge
\lambda_{\min}
\int_0^1
\norm{\dot\gamma(s)}^2ds\\
&=
\lambda_{\min}
\dist_g(\bar q_R,\tilde q_R)^2.
    \end{aligned}
\end{equation}

By Cauchy--Schwarz,
$\lambda_{\min}\dist_g(\bar q_R,\tilde q_R)\le\norm{\grad\mathcal
F(\tilde q_R)}$, which combined with \eqref{eq:gradient_bias} gives
\eqref{eq:bias} with $C_6=8C_{5}$.
\end{proof}

\subsection{Local Descent under Tangent-Space LQR}

In the receding-horizon implementation, the target
$\bar q$ is instantiated at each replanning step as the tangent-space
target $\tilde q_R^k$. The following result directly connects the
proposed LQR controller to local descent. In particular, the required
alignment condition is imposed on the velocity generated by the
tangent-space LQR and explicitly accounts for the local target spread.

For a fixed target $\bar q\in\M$, define
\begin{equation}
    \Psi(x)
    =
    \frac12\dist_g(x,\bar q)^2,
    \qquad
    r(x)=\Log_x(\bar q).
\label{eq:dW}
\end{equation}
The first variation formula \cite{absil2008optimization} gives
\begin{equation}
    d\Psi_x[v]
    =
    -\inner{r(x)}{v}_x.
\end{equation}

\begin{assumption}[LQR alignment and local boundedness]
\label{ass:lqr_alignment}
For the tangent-space LQR controller in
Section~\ref{sec:local_lqr}, there exist constants
$c_1,c_2,\Gamma,L_v,L_u>0$ and a sufficiently small neighborhood such
that, at each replanning step,
\begin{equation}
    \inner{r_k}{v_k}_{x_k}
    \ge
    c_1\norm{r_k}_{x_k}^2
    -
    c_2\kappa\rho_k^3,
\label{eq:lqr_alignment}
\end{equation}
where
$
    r_k=\Log_{x_k}(\bar q),
    \;
    \rho_k
    =
    \max\left\{
    \norm{r_k}_{x_k},
    \sigma_k^{\mathrm{tar}}
    \right\},
    \;
    \sigma_k^{\mathrm{tar}}
    =
    \max_j\norm{\eta_j^k}_{x_k},
$
and
\begin{equation}
    \sigma_k^{\mathrm{tar}}
    \le
    \Gamma\norm{r_k}_{x_k}.
\label{eq:target_spread_alignment}
\end{equation}
Moreover, along the corresponding LQR closed-loop segment
$t\in[t_k,t_k+\Delta t]$,
\begin{equation}
    \norm{v(t)}_{x(t)}
    \le L_v\rho_k,
    \qquad
    \norm{u(t)}_{x(t)}
    \le L_u\rho_k.
\label{eq:lqr_local_bounds}
\end{equation}
The constants are uniform over the selected local neighborhood.
\end{assumption}

\begin{theorem}[Local descent under tangent-space LQR]
\label{thm:descent}
Suppose the trajectory generated by the tangent-space LQR remains in
a compact strongly convex neighborhood on which $\Psi$ has uniformly
bounded Hessian. Under Assumption~\ref{ass:lqr_alignment}, there exist
$\rho^*>0$, $\Delta t^*>0$, and $C_7>0$ such that, whenever
$\rho_k\le\rho^*$ and $0<\Delta t\le\Delta t^*$,
\begin{equation}
    \Psi(x_{k+1})-\Psi(x_k)
    \le
    -C_7\Delta t
    \norm{r_k}_{x_k}^2.
\label{eq:descent}
\end{equation}
Thus, the proposed tangent-space LQR yields local descent whenever
its closed-loop velocity satisfies the gain-dependent alignment
condition \eqref{eq:lqr_alignment}.
\end{theorem}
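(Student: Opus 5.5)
We bound the one-step increment by a second-order Taylor expansion of $\Psi$ along the closed-loop segment $x(t)$, $t\in[t_k,t_k+\Delta t]$, generated by the intrinsic dynamics \eqref{eq:dynamics} (equivalently, along the update \eqref{eq:position_update}). Set $\phi(t)=\Psi(x(t))$. Then
\[
\phi'(t)=d\Psi_{x(t)}[v(t)]=-\inner{r(x(t))}{v(t)}_{x(t)},
\qquad
\phi''(t)=\operatorname{Hess}\Psi(x(t))[v,v]+d\Psi_{x(t)}[u(t)],
\]
where the second identity uses $\nabla_{\dot x}v=u$. Taylor's theorem with integral remainder gives
\[
\Psi(x_{k+1})-\Psi(x_k)=\Delta t\,\phi'(t_k)+\int_{t_k}^{t_k+\Delta t}(t_k+\Delta t-s)\,\phi''(s)\,ds .
\]

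\textbf{Step 1: first-order term.} Since $\phi'(t_k)=-\inner{r_k}{v_k}_{x_k}$, the alignment condition \eqref{eq:lqr_alignment} yields $\Delta t\,\phi'(t_k)\le-c_1\Delta t\norm{r_k}^2+c_2\kappa\Delta t\,\rho_k^3$. By \eqref{eq:target_spread_alignment}, $\rho_k\le\max\{1,\Gamma\}\norm{r_k}$, so $\rho_k^3\le\max\{1,\Gamma\}^3\norm{r_k}^2\rho_k\le\max\{1,\Gamma\}^3\rho^*\norm{r_k}^2$. Choosing $\rho^*$ small enough that $c_2\kappa\max\{1,\Gamma\}^3\rho^*\le c_1/4$ absorbs the curvature term, giving $\Delta t\,\phi'(t_k)\le-\tfrac34c_1\Delta t\norm{r_k}^2$.

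\textbf{Step 2: remainder term.} Let $H$ denote the uniform bound on $\operatorname{Hess}\Psi$ on the compact neighborhood. By \eqref{eq:lqr_local_bounds}, $|\operatorname{Hess}\Psi[v,v]|\le HL_v^2\rho_k^2$. Also $\norm{r(x(t))}=\dist_g(x(t),\bar q)\le\norm{r_k}+L_v\rho_k\Delta t$, so $|d\Psi[u]|\le L_u\rho_k(\norm{r_k}+L_v\rho_k\Delta t)$. Using $\rho_k\le\max\{1,\Gamma\}\norm{r_k}$ again, $|\phi''(s)|\le C'\norm{r_k}^2$ for $\Delta t\le1$, where $C'$ depends only on $H,L_v,L_u,\Gamma$. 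The remainder is therefore at most $\tfrac12C'\Delta t^2\norm{r_k}^2$. Taking $\Delta t^*\le c_1/(2C')$ bounds it by $\tfrac14c_1\Delta t\norm{r_k}^2$, and combining the two steps gives \eqref{eq:descent} with $C_7=c_1/2$.

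\textbf{Main obstacle.} The delicate point is Step 2, namely ensuring that every error term is quadratic in $\norm{r_k}$ rather than merely in $\rho_k$. This is exactly what \eqref{eq:target_spread_alignment} provides: without it, the target spread could leave a residual that does not vanish as $r_k\to0$.

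A secondary issue arises if one insists on the discrete update \eqref{eq:position_update} rather than the continuous flow. In that case I would instead expand $s\mapsto\Psi(\Exp_{x_k}(s w))$ with $w=\Delta t v_k+\tfrac12\Delta t^2u_k$. The first-order term is $-\inner{r_k}{w}=-\Delta t\inner{r_k}{v_k}-\tfrac12\Delta t^2\inner{r_k}{u_k}$, whose $\Delta t^2$ part is $O(\Delta t^2\norm{r_k}^2)$ by the same bound. The second-order term is bounded by $H\norm{w}^2\le C\Delta t^2\rho_k^2$. The argument then closes identically.
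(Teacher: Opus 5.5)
Your proposal is correct and follows essentially the same route as the paper: a second-order Taylor expansion of $\Psi$ along the closed-loop segment, the alignment condition \eqref{eq:lqr_alignment} on the first-order term, the Hessian bound with \eqref{eq:lqr_local_bounds} on the remainder, and \eqref{eq:target_spread_alignment} to turn powers of $\rho_k$ into $\norm{r_k}_{x_k}^2$, with $\rho^*$ and $\Delta t^*$ each absorbing $c_1/4$ to give $C_7=c_1/2$. Two points the paper leaves implicit, you make explicit: the triangle-inequality bound $\dist_g(x(t),\bar q)\le\norm{r_k}_{x_k}+L_v\rho_k\Delta t$ (the paper simply asserts $\norm{\grad\Psi}\le C_r\rho_k$), and the treatment of the discrete update \eqref{eq:position_update} via $s\mapsto\Psi(\Exp_{x_k}(sw))$ (the paper identifies $x_{k+1}$ with the continuous flow at $t_k+\Delta t$).
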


\begin{proof}
Let
$\rho_k=\max\{\norm{r_k}_{x_k},\sigma_k^{\mathrm{tar}}\}$.
By \eqref{eq:target_spread_alignment},
\begin{equation}
    \rho_k
    \le
    \Gamma_\rho\norm{r_k}_{x_k},
    \qquad
    \Gamma_\rho:=\max\{1,\Gamma\}.
\label{eq:rho_relation}
\end{equation}
Along the LQR closed-loop trajectory,
\begin{equation}
\small
    \frac{d^2}{dt^2}\Psi(x(t))
    =
    \operatorname{Hess}\Psi(x(t))[v(t),v(t)]
    +
    \inner{\grad\Psi(x(t))}{u(t)}_{x(t)}.
\end{equation}
Since the trajectory remains in a compact strongly convex neighborhood,
there exists $C_H>0$ such that
$\norm{\operatorname{Hess}\Psi}\le C_H$. Moreover,
$\norm{\grad\Psi(x(t))}=\dist_g(x(t),\bar q)$ is locally bounded by
$C_r\rho_k$ for some uniform $C_r>0$. Using
\eqref{eq:lqr_local_bounds} therefore gives
\begin{equation}
\left|
\frac{d^2}{dt^2}\Psi(x(t))
\right|
\le
\left(
C_HL_v^2+C_rL_u
\right)\rho_k^2
=:C_8\rho_k^2.
\label{eq:second_derivative_bound}
\end{equation}
Taylor's formula consequently yields
\begin{equation}
\begin{aligned}
\Psi(x_{k+1})-\Psi(x_k)
&\le
-\Delta t\inner{r_k}{v_k}_{x_k}
+
\frac12C_8\Delta t^2\rho_k^2.
\end{aligned}
\label{eq:taylor_descent}
\end{equation}

Using \eqref{eq:lqr_alignment} and \eqref{eq:rho_relation},
\begin{equation}
\small
\begin{aligned}
\Psi(x_{k+1})-\Psi(x_k)
&\le
-c_1\Delta t\norm{r_k}_{x_k}^2
+
c_2\kappa\Delta t\rho_k^3
+
\frac12C_8\Delta t^2\rho_k^2
\\
&\le
\left[
-c_1
+
c_2\kappa\Gamma_\rho^3\rho_k
+
\frac12C_8\Gamma_\rho^2\Delta t
\right]
\Delta t
\norm{r_k}_{x_k}^2.
\end{aligned}
\label{eq:descent_prebound}
\end{equation}
Choose $\rho^*>0$ and $\Delta t^*>0$ such that
\begin{equation}
    c_2\kappa\Gamma_\rho^3\rho^*
    \le \frac{c_1}{4},
    \qquad
    \frac12C_8\Gamma_\rho^2\Delta t^*
    \le \frac{c_1}{4}.
\label{eq:smallness_conditions}
\end{equation}
Then, for $\rho_k\le\rho^*$ and
$0<\Delta t\le\Delta t^*$,
\begin{equation}
    \Psi(x_{k+1})-\Psi(x_k)
    \le
    -\frac{c_1}{2}\Delta t
    \norm{r_k}_{x_k}^2.
\end{equation}
Hence \eqref{eq:descent} holds with
$C_7=c_1/2$.
\end{proof}

\begin{remark}
The alignment condition
\eqref{eq:lqr_alignment} is imposed on the velocity generated by the
proposed tangent-space LQR, while \eqref{eq:lqr_local_bounds} captures
the corresponding local closed-loop bounds. The additional inequality \eqref{eq:target_spread_alignment} ensures that the target-spread term introduced by the tangent-space approximation is of the same local scale as the tracking error. Therefore, the result provides a direct
sufficient condition under which the proposed LQR inherits local
descent on the nonlinear manifold.
\end{remark}

\section{Simulation Results}
\label{sec:simulation}

To validate the theoretical guarantees, we implement the proposed $R$-D$^2$OC framework on a 3D ellipsoidal manifold, where multiple second-order agents coordinate to match a target distribution over the curved geometry.

Fig.~\ref{fig:simulation_trajectory} depicts the overall system behavior and spatial trajectories.
The agents successfully track the reference distribution along the Riemannian manifold while maintaining smooth coordination paths. Originating from tightly clustered initial positions (black crosses), trajectories evolve smoothly across the surface to match their time-averaged distribution with the targets (blue dots). This confirms that the second-order tangent-space LQR formulation respects geometric constraints while driving the team toward the desired distribution.

To evaluate the geometric approximation properties and empirically assess Theorem~\ref{thm:bias}, we analyze the log-log scaling relationship in Fig.~\ref{fig:descent_analysis}(a). The empirical approximation bias exhibits a fitted slope of $2.73$, which is consistent with the theoretical cubic-order $\mathcal{O}(\rho_{\max}^3)$ upper bound. This observation supports the predicted local scaling of the tangent-space center bias as the target spread varies.

We next evaluate the local descent property in Theorem~\ref{thm:descent} using Fig.~\ref{fig:descent_analysis}(b). The closed-loop trajectories satisfy the alignment condition of Assumption~\ref{ass:lqr_alignment} with $c_1 = 1.40$, yielding $C_7 = c_1/2 = 0.70$ in~\eqref{eq:descent}. Within the empirical local neighborhood $\rho_k \le \rho^* \approx 4.1$ (shaded region), the observed objective decrease aligns well with the theoretical bound $-C_7\Delta t\|r_k\|^2$. Beyond $\rho^*$, this local bound no longer holds, which is consistent with the conditional and local nature of Theorem~\ref{thm:descent}.

With an average per-agent Riccati solve time of $0.49~\text{ms}$ relative to the $0.1~\text{s}$ replanning interval $\Delta t$, the computation remains well within the budget for onboard execution.

\begin{figure}[!t]
\vspace{-0.2in}
    \centering
    \includegraphics[width=.82\linewidth]{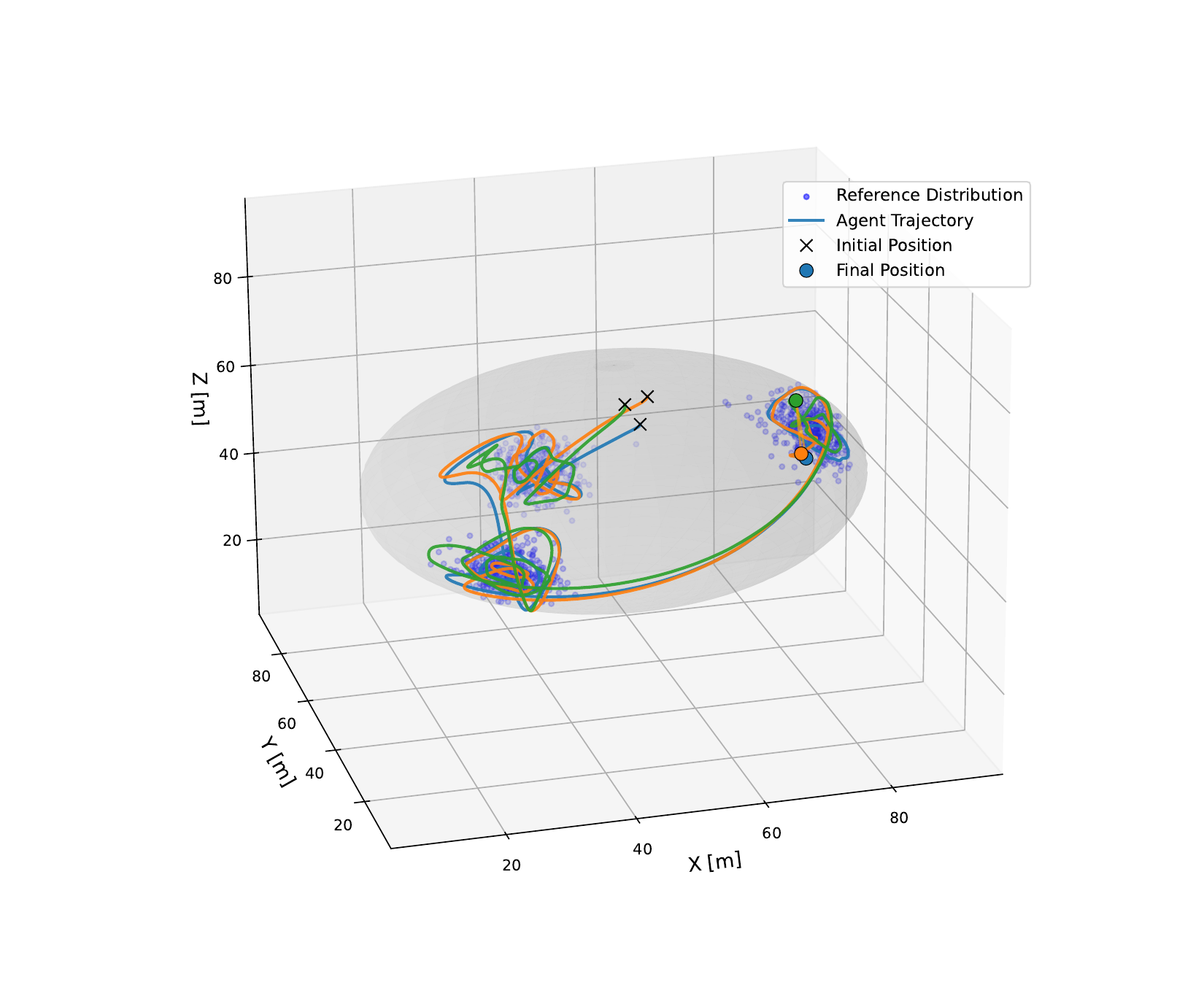}
    \vspace{-0.2in}
    \caption{$R$-D$^2$OC result: Three-dimensional multi-agent trajectories and reference distribution on the Riemannian ellipsoid manifold.}
    \label{fig:simulation_trajectory}
    \vspace{-.1in}
\end{figure}

\begin{figure}[!t]
    \centering
    \subfloat[]{
    \includegraphics[width=.48\linewidth]{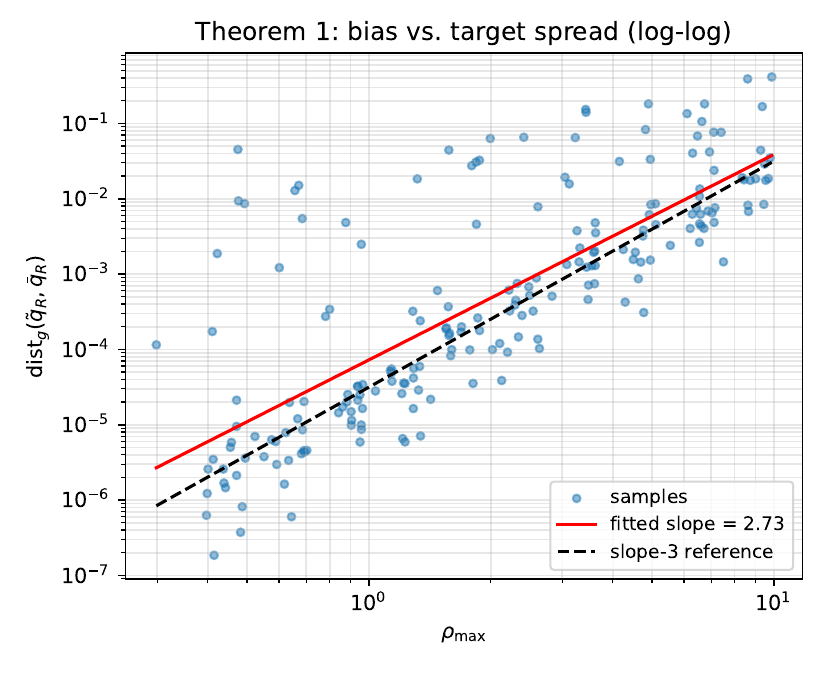}}
    \hfill
    \subfloat[]{
    \includegraphics[width=.48\linewidth]{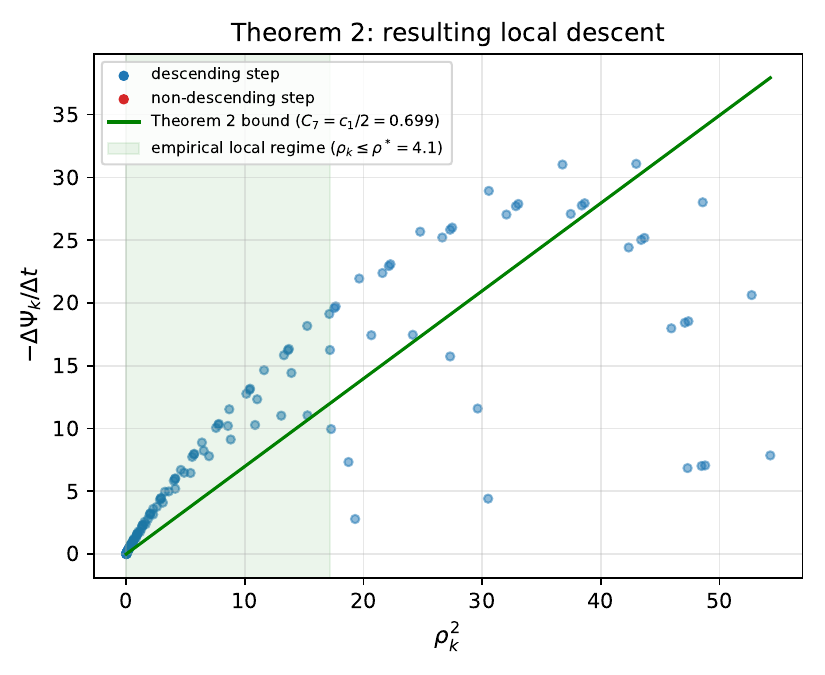}}
    \caption{Empirical verification of main theoretical results. (a) Approximation bias $\dist_g(\tilde{q}_R, \bar{q}_R)$ versus target spread, validating the third-order scaling in Theorem~1. (b) Empirical descent rate $-\Delta\Psi_k/\Delta t$ versus $\rho_k^2$, with the shaded region marking the neighborhood $\rho_k\le\rho^*\approx4.1$ where Theorem~\ref{thm:descent} holds.}
    \label{fig:descent_analysis}
    \vspace{-.2in}
\end{figure}

\section{Conclusion}

This paper presented an intrinsic $R$-D$^2$OC framework for second-order multi-agent systems on Riemannian manifolds. By mapping the intrinsic transport problem to local tangent spaces, the framework reduces online optimization to linear-quadratic subproblems with computational complexity depending strictly on the manifold dimension. We provided rigorous theoretical guarantees, including third-order barycentric bias bounds and a conditional discrete-time descent property. Numerical simulations on the sphere validated these theoretical results, confirming efficient distribution control on curved spaces.

\bibliographystyle{IEEEtran}
\bibliography{reference}

\end{document}